\documentclass[12pt]{article}

\usepackage{amsmath,amssymb,amsthm}
\usepackage{geometry}
\usepackage{enumitem}
\usepackage{titlesec}

\newtheorem{theorem}{Theorem}[section]
\newtheorem{definition}[theorem]{Definition} 
\newtheorem*{remark}{Remark}

\date{}

\begin{document}
\title{Quantitative Soft-to-Hard Terminal Constraint Convergence for the Heat Equation}
\author{
Sung-Sik Kwon\\
Department of Mathematics and Physics\\
North Carolina Central University\\
Durham, NC 27707, USA\\
\texttt{skwon@nccu.edu}
}

\maketitle

\begin{abstract}
We study an optimal control problem for the heat equation with a prescribed
terminal state. To circumvent the difficulty of enforcing a hard terminal
constraint, we analyze a penalized formulation and prove that the corresponding
optimal controls and terminal states converge to the exact constrained solution
as the penalty parameter \(\alpha \to \infty\). We establish explicit
quantitative convergence estimates of order \(O(\alpha^{-\theta})\), including
the sharp \(O(1/\alpha)\) rate under stronger modal summability assumptions on
the terminal mismatch. A finite-dimensional prototype is used to illustrate the
underlying projection structure, while numerical illustrations are reported in a
companion study.
\end{abstract}

\bigskip
\noindent
\textbf{Keywords:}
Optimal control,
parabolic partial differential equations,
terminal constraints,
penalty methods,
exact penalization,
convergence rates

\vspace{1em}

\noindent\textbf{MSC 2020:}
49K20, 93C20, 93B05, 49J20, 35K05

\section{Introduction}

Optimal control problems governed by partial differential equations (PDEs) arise
naturally in many areas of science and engineering, including thermal
regulation, diffusion processes, and distributed parameter systems. In such
problems, one seeks to influence the behavior of a distributed system through a
control input while minimizing a prescribed performance criterion. A
comprehensive theoretical framework for PDE-constrained optimization has been
developed in classical works such as \cite{Hinze2009,Lions1971,Troltzsch2010}.

A common objective in many applications is to steer the system toward a
prescribed terminal state. Such requirements lead to \emph{hard terminal
constraints}, in which the terminal state is enforced exactly. However,
enforcing hard terminal conditions directly often presents analytical and
computational challenges. For this reason, \emph{penalized terminal constraint}
formulations are widely employed, where the terminal condition is incorporated
into the objective functional through a quadratic penalty term. Penalty,
regularization, and relaxation methods play an important role in constrained
optimal control and PDE-constrained optimization because they avoid the
difficulties associated with enforcing exact constraints while retaining strong
approximation properties \cite{Hinze2009,Troltzsch2010}.

Related approaches to handling terminal constraints via penalization have been
studied in the optimal control literature for both ODEs and PDEs; see, for
example, \cite{GugatZuazua2016,Maurer2008}. These works establish exact penalization 
results, showing that beyond a certain threshold value 
of the penalty parameter, the solution of the penalized problem coincides with 
the solution of the original constrained problem. In particular, Gugat and Zuazua 
\cite{GugatZuazua2016} establish such results for linear evolution equations using 
a nonsmooth \(L^1\)-type penalty framework, while Maurer \cite{Maurer2008} proves 
related exact-penalty results in the ODE setting. Related developments in
PDE-constrained optimization, regularization, and numerical approximation
include \cite{Casas1993,ItoKunisch2008,MeidnerVexler2008a,
MeidnerVexler2008b,Troltzsch2005}. However, these approaches do not provide
quantitative convergence rates for finite values of the penalty parameter.

The central objective of this paper is to analyze the relationship between
penalized terminal constraints and exact terminal constraints for the
one-dimensional heat equation.  In particular, we study the convergence of 
penalized solutions to the corresponding hard-constrained solutions as the 
penalty parameter \( \alpha\to\infty.\)

A principal result of this work is the establishment of quantitative
soft-to-hard convergence estimates. Under suitable modal summability
assumptions on the terminal mismatch, we derive explicit algebraic convergence
rates for both the terminal-state error and the control error. In particular,
the terminal-state convergence rate ranges from \(O(\alpha^{-1/2})\) under the
baseline admissibility condition to the sharper \(O(1/\alpha)\) rate under
stronger modal summability  conditions, while the control convergence rates are
shown to be of order \(O(\alpha^{-\theta})\) for \(0<\theta\le1\).

Unlike previous exact-penalty approaches, which establish exact recovery beyond
a threshold penalty value, the present work derives explicit quantitative
convergence rates for quadratic terminal penalization in a parabolic setting.
To the best of our knowledge, explicit \(O(1/\alpha)\)-type convergence
estimates for quadratic terminal penalization in this setting have not been
previously reported.

To motivate the analysis, we first examine a simple finite-dimensional prototype
inspired by a simplified rocket motion model. Although the control belongs to an
infinite-dimensional space, the terminal condition reduces to a single scalar
moment constraint. This structure reveals a projection geometry in Hilbert
space, consistent with classical results in functional optimization
\cite{Luenberger1969}. The prototype serves as an instructive model for the
infinite-dimensional case considered later.

The analysis is then extended to a linear parabolic control problem governed by
the one-dimensional heat equation. Using modal decompositions, we characterize
the class of admissible terminal states that can be achieved under hard terminal
constraints. This characterization reflects the smoothing properties of the
heat equation and provides a natural framework for analyzing penalized terminal 
formulations.

The present work focuses primarily on theoretical analysis, while numerical
investigations of the predicted convergence behavior are reported in the
companion study by Kwon and Nunda \cite{KwonNunda2026}. Together, the two works
provide a unified theoretical and computational perspective on the approximation
of hard terminal constraints by penalized formulations.

\paragraph{Contributions.}
The main contributions of this work are as follows:
\begin{itemize}
  \item We provide a characterization of the admissible terminal set for the 
  constrained optimal control problem.
  \item We analyze a penalized formulation of the terminal constraint and prove 
  the convergence of the penalized controls and the corresponding states to 
  the exact constrained solution.
  \item We establish quantitative convergence estimates for the penalized 
  formulation, including terminal-state convergence rates ranging from 
  \(O(\alpha^{-1/2})\) to \(O(1/\alpha)\), together with 
  \(O(\alpha^{-\theta})\) control convergence rates under suitable modal 
  summability assumptions.
  \item We introduce a finite-dimensional prototype that illustrates the 
  soft-to-hard constraint transition and clarifies the underlying projection 
  structure of the problem.
\end{itemize}

\section{Finite-Dimensional Prototype}

Before studying the heat equation, we first examine a simpler control problem
that illustrates the same soft-to-hard terminal constraint mechanism in
a more elementary setting. This example, referred to here as the \emph{rocket
problem}, serves as a useful prototype that clarifies the structure of terminal
penalization before addressing the infinite-dimensional heat equation.

Although the control is a function of time and therefore belongs to the
infinite-dimensional space \(L^2(0,T)\), the terminal condition depends only on a
single scalar moment of the control. Consequently, the minimum-energy control
lies in a one-dimensional subspace and admits an explicit formula.

This simple structure clarifies the mechanism behind soft-to-hard convergence
and provides insight into the infinite-dimensional heat equation problem studied
in later sections.

\subsection{Hard Terminal Constraint}

Let \(T>0\). We consider the following second-order system, which may be
interpreted as a simplified model of vertical rocket motion:
\begin{equation} \label{rocket_system}
\begin{cases}
    y''(t)=v(t)-1, & 0<t<T, \\[4pt]
    y(0)=0, \\[2pt]
    y'(0)=0.
\end{cases}
\end{equation}
Here, \(y(t)\) represents position, \(v(t)\) represents thrust, and the constant
term \(1\) corresponds to normalized gravitational acceleration. Throughout this
section, we assume
\[ 
    v\in L^2(0,T).
\]
We impose the hard terminal condition
\[
    y(T)=y_T, 
\]
which models the requirement that the system attain a prescribed terminal
state, such as a desired final altitude.

In this paper, we use the term \emph{hard terminal constraint} to refer to the
exact equality constraint \(y(T)=y_T\), in contrast to the penalized
(\emph{soft}) formulation introduced later.

The hard terminal control problem is
\[
    \min_{v\in L^2(0,T)} \frac12\|v\|_{L^2(0,T)}^2
    \quad \text{subject to} \quad \eqref{rocket_system}
    \text{ and } y(T)=y_T.
\]
Integrating twice gives
\[
    y(t) = \int_0^t (t-s)(v(s)-1)\,ds.
\]
At \(t=T\),
\[
  \begin{aligned}
	y(T) & = \int_0^T (T-s)(v(s)-1)\,ds \\ 
	& = \int_0^T (T-s)v(s)\,ds - \frac{T^2}{2}.
  \end{aligned}
\]
Thus, the terminal condition becomes
\[
  \int_0^T (T-s)v(s)\,ds = y_T+\frac{T^2}{2}.
\]
Define
\[
    d = y_T+\frac{T^2}{2}, \qquad g(t)=T-t.
\]
Then
\[
  \langle v,g\rangle=d.
\]
where \(\langle \cdot,\cdot \rangle\) denotes the \(L^2(0,T)\) inner product.

Therefore, the hard terminal problem reduces to
\[
  \min_{v\in L^2(0,T)}
  \frac12\|v\|_{L^2(0,T)}^2 \quad \text{subject to} \quad
  \langle v,g\rangle=d.
\]
The solution is obtained as the orthogonal projection of the origin onto the
affine hyperplane
\[
  \{v:\langle v,g\rangle=d\},
\]
a classical result in Hilbert space optimization \cite{Luenberger1969}. Hence
\[
	v^*(t) = \frac{d}{\|g\|^2}g(t).
\]
Since
\[
  \|g\|^2 = \frac{T^3}{3},
\]
we obtain
\[
    v^*(t) = \frac{3}{T^3} \left( y_T+\frac{T^2}{2} \right) (T-t).
\]
This is the unique minimum-energy control satisfying the hard terminal
constraint.

Substituting \(v^*\) into the state representation yields
\[
    y^*(t) = \frac{1}{2T^3} \left( y_T+\frac{T^2}{2} \right) t^2(3T-t) -
\frac{t^2}{2}.
\]
This explicit polynomial expression completes the characterization of the
optimal pair \((v^*,y^*)\). One readily verifies that
\[
    y^*(T)=y_T.
\]

\subsection{Penalized Terminal Constraint Formulation}

Here we replace the hard terminal condition with a quadratic terminal penalty.

For \(\alpha>0\), define
\[
    J_\alpha(v) = \frac12\|v\|^2 + \frac{\alpha}{2} |\langle v,g\rangle-d|^2.
\]
Since this functional is strictly convex and coercive on \(L^2(0,T)\), there
exists a unique minimizer \(v_\alpha^*\).
The optimality condition is
\[
    v_\alpha^* = -\alpha(\langle v_\alpha^*, g\rangle - d)g,
\]
which shows that \(v_\alpha^*\) lies in \(\mathrm{span}\{g\}\). Writing
\[
    v_\alpha^*(t) = c_\alpha g(t),
\]
we obtain
\[
    c_\alpha = \frac{\alpha d}{1+\alpha a}, \qquad a=\|g\|^2.
\]
Hence
\[
    v_\alpha^*(t) = \frac{\alpha}{1+\alpha a} d\,g(t) 
	= \frac{\alpha a}{1+\alpha a} v^*(t).
\]
Substituting \(v_\alpha^*\) into the state representation yields
\[
    y_\alpha^*(t) 
	= \beta_\alpha \left( y^*(t)+\frac{t^2}{2} \right) - \frac{t^2}{2},
    \qquad \beta_\alpha=\frac{\alpha a}{1+\alpha a}.
\]
From these expressions, it is already clear that
\[
    v_\alpha^* \to v^*,\qquad y_\alpha^* \to y^*
\]
as \(\alpha\to\infty\). We now quantify these convergence rates explicitly.

\subsection{Soft-to-Hard Convergence Rates}

Since
\[
    v_\alpha^* = \beta_\alpha v^*, \qquad
    \beta_\alpha=\frac{\alpha a}{1+\alpha a},
\]
we have
\[
    v_\alpha^*-v^* = (\beta_\alpha-1)v^* = -\frac{1}{1+\alpha a} v^*.
\]
Thus
\[
    \|v_\alpha^*-v^*\| = \frac{\|v^*\|}{1+\alpha a} = O(1/\alpha).
\]

Similarly,
\[
    y_\alpha^*(t)-y^*(t) = (\beta_\alpha-1)
    \left( y^*(t)+\frac{t^2}{2} \right),
\]
and therefore
\[
    \|y_\alpha^*-y^*\| = O(1/\alpha).
\]

Evaluating the state difference formula at \(t=T\), we obtain
\[
    y_\alpha^*(T)-y_T = -\frac{d}{1+\alpha a},
\]
and therefore
\[
    |y_\alpha^*(T)-y_T| = O(1/\alpha).
\]

Thus, the penalized optimal pair \((v_\alpha^*,y_\alpha^*)\) converges to the
hard-constrained solution \((v^*,y^*)\), with the control error, state error,
and terminal mismatch all decaying at rate \(O(1/\alpha)\) as
\(\alpha\to\infty\).

\subsection{Interpretation as a Prototype}

Although the rocket problem is formulated in the infinite-dimensional space
\(L^2(0,T)\), its essential structure is fundamentally one-dimensional. The
terminal constraint depends only on the single scalar moment
\[
  \langle v, g \rangle = d,
\]
and consequently the optimal control lies in the one-dimensional subspace
\[
  \operatorname{span}\{g\}.
\]
Thus the problem reduces to determining a single coefficient multiplying 
a fixed function. This reduction explains why explicit formulas for both the
hard-constrained and penalized controls can be obtained.

This simple structure provides a useful prototype for the parabolic control
problem studied later. In the rocket problem, the terminal condition is governed
by a single scalar quantity. In contrast, for the heat equation, the terminal
condition is determined by an infinite sequence of modal quantities associated
with the eigenfunctions of the governing differential operator.

More precisely, the scalar quantity \(\langle v,g\rangle\) that appears in the
rocket formulation is replaced in the parabolic setting by modal coefficients
\[
    d_n = y_{T,n} - e^{-\lambda_nT}y_{0,n},
\]
where \(y_{0,n}\) and \(y_{T,n}\) denote the Fourier coefficients of the initial
and terminal states with respect to the eigenfunctions of the Laplacian. Thus
the heat equation may be viewed as an infinite-mode analogue of the rocket
problem, with each mode contributing independently to the terminal condition.

A fundamental difference between the two settings is that, unlike the rocket
problem, not every terminal state is reachable in the parabolic case. This
limitation reflects the smoothing property of the heat equation and leads
naturally to the definition of an admissible target set, denoted by \(\mathcal
A_T(y_0)\), which characterizes those terminal states that can be achieved under
hard terminal constraints.

The rocket problem therefore serves as a conceptual bridge between the geometric
structure of finite-dimensional penalization and the infinite-dimensional modal
framework arising in parabolic control. These ideas extend naturally to the heat 
equation, where the same penalization mechanism acts across infinitely many
modes.

\section{Parabolic Terminal Control Problem}

We extend the soft-to-hard terminal constraint framework developed for the
finite-dimensional rocket prototype to an infinite-dimensional parabolic control
problem. As a canonical model, we consider the one-dimensional heat equation
with distributed control.

Let \(\Omega=(0,1)\) denote the spatial domain, and let \(T>0\) be a fixed
final time. We define the space-time domain
\[
    Q := (0,1)\times(0,T).
\]

\subsection{State Equation}

We consider the one-dimensional heat equation with a distributed control:
\begin{equation} \label{state_equation}
  \begin{cases}
    y_t - y_{xx} = u, & (x,t)\in Q, \\[4pt]
    y(0,t)=0, \quad y(1,t)=0, & t\in(0,T), \\[4pt]
    y(x,0)=y_0(x),            & x\in(0,1).
  \end{cases}
\end{equation}
Here, \(y(x,t)\) denotes the state variable, \(u(x,t)\) denotes the distributed
control, and \(y_0(x)\) is the prescribed initial state. For clarity, we
distinguish the distributed control \(u(x,t)\) used in the present parabolic
problem from the scalar control \(v(t)\) introduced in the effectively one-dimensional 
rocket prototype of Section~2.
 
Throughout this work, we assume that
\[
    y_0 \in L^2(0,1),\qquad u \in L^2(Q).
\]
Under these assumptions, the state equation admits a unique weak solution
satisfying
\[
    y \in L^2(0,T;H_0^1(0,1)),
\]
a standard well-posedness result for parabolic equations (see, for example,
\cite{Evans2010,Troltzsch2010}).

Our objective is to steer the system toward a prescribed terminal profile
\(y_T(x),\) at time \(T\), using either exact terminal constraints or penalized
terminal constraints, as described in the following subsections.

\subsection{Hard Terminal Constraint Formulation}

We first consider the formulation in which the terminal profile is enforced
exactly. This corresponds to the ideal situation in which the system is required
to reach a prescribed terminal state at the final time.

Let \(y_T(x)\) denote a desired terminal profile. We measure control effort using
the quadratic energy functional
\[
    J(u) = \frac12\int_0^T\int_0^1 |u(x,t)|^2\,dx\,dt.
\]
The terminal condition is imposed exactly:
\[
    y(x,T)=y_T(x).
\]
Thus, the hard terminal control problem is
\[
\begin{aligned}
    \min_{u \in L^2(Q)} &\quad J(u) \\
    \text{subject to} &\quad \eqref{state_equation}
    \quad \text{and} \quad y(x,T)=y_T(x).
\end{aligned}
\]
This formulation seeks a control of minimum energy that steers the system
exactly to the desired terminal state.

However, unlike the rocket prototype of Section~2, not every
terminal profile \(y_T(x)\) is reachable using finite-energy controls. This
limitation reflects the smoothing property of the heat equation and leads
naturally to the introduction of an admissible set of terminal targets, which is
defined in the following subsection.

\subsection{Admissible Terminal Targets}

Here we analyze the structure of the hard terminal constraint problem for
admissible targets.

To understand why restrictions on terminal profiles arise, it is useful to
examine the behavior of individual Fourier modes. Solutions of the heat equation
exhibit strong decay in high-frequency components: modes associated with larger
eigenvalues decay more rapidly in time. As a result, reconstructing fine-scale
oscillations at the terminal time requires increasingly large control effort.

This suggests that admissible terminal states should satisfy a
frequency-dependent restriction reflecting the smoothing action of the heat
equation. In particular, high-frequency components are increasingly difficult to
realize using finite-energy controls.

We make this observation precise through a modal decomposition of the
system, which leads to an explicit characterization of the admissible set of
terminal targets. The characterization developed below is closely related to
classical reachable-set and minimum-energy control theory for parabolic
equations; see, for example, \cite{ChenRosier2022, GlowinskiLions1994,Troltzsch2010}. 
The present formulation is included here to provide a concrete framework for
the subsequent soft-to-hard penalized convergence analysis.

\paragraph{Admissible Terminal Targets}

\begin{definition}[Admissible Terminal Target Set]
Let
  \[
    e_n(x)=\sqrt{2}\sin(n\pi x), \qquad \lambda_n=(n\pi)^2,
  \]
denote the eigenfunctions and eigenvalues of the negative Laplacian
\(-\frac{d^2}{dx^2}\) on \((0,1)\) with homogeneous Dirichlet boundary conditions.
Expand
  \[
    y_0 =\sum_{n=1}^{\infty} y_{0,n} e_n,\qquad y_T 
    = \sum_{n=1}^{\infty} y_{T,n} e_n.
  \]
We define the admissible target set
  \[
    \mathcal A_T(y_0)
    =
    \left\{
    y_T\in L^2(0,1):
    \sum_{n=1}^{\infty}
    \frac{2\lambda_n}{1-e^{-2\lambda_n T}}
    \left|y_{T,n} - e^{-\lambda_n T}y_{0,n} \right|^2 <\infty
    \right\}.
  \]
The set \(\mathcal A_T(y_0)\) therefore characterizes the terminal profiles that
are reachable from \(y_0\) using finite-energy controls.
\end{definition}

This condition ensures that the total control energy required to realize the
terminal state remains finite.

\paragraph{Existence and Uniqueness}
The admissible target condition characterizes the class of terminal states for
which the hard terminal control problem admits a solution.

\begin{theorem}
[Existence and Uniqueness -- Hard Terminal Constraint] Let
  \[
    y_0 \in L^2(0,1), \qquad y_T \in \mathcal A_T(y_0).
  \]
Then there exists a unique minimum-energy control
  \[
    u^* \in L^2(Q)
  \]
such that
  \[
    y(\cdot,T)=y_T,
  \]
and \(u^*\) minimizes
  \[
    J(u) = \frac12 \|u\|_{L^2(Q)}^2.
  \]
\end{theorem}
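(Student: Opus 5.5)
We argue exactly as in the rocket prototype of Section~2: express the terminal constraint as a linear condition on \(u\), show that the admissible controls form a nonempty closed affine subspace of \(L^2(Q)\), and take the orthogonal projection of the origin onto it. First, expand \(u(x,t)=\sum_n u_n(t)e_n(x)\) with \(u_n\in L^2(0,T)\) and \(\|u\|_{L^2(Q)}^2=\sum_n\|u_n\|_{L^2(0,T)}^2\). By Duhamel's formula for each mode,
\[
y_n(T)=e^{-\lambda_nT}y_{0,n}+\int_0^T e^{-\lambda_n(T-s)}u_n(s)\,ds .
\]
Hence the constraint \(y(\cdot,T)=y_T\) is equivalent to the infinite family of scalar moment conditions
\[
\langle u_n,g_n\rangle_{L^2(0,T)}=d_n,\qquad g_n(s)=e^{-\lambda_n(T-s)},\qquad d_n=y_{T,n}-e^{-\lambda_nT}y_{0,n},
\]
which are the parabolic analogue of \(\langle v,g\rangle=d\).

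Next, we show that the constraint set \(\mathcal U=\{u\in L^2(Q): y(\cdot,T)=y_T\}\) is a closed affine subspace. The map \(u\mapsto y(\cdot,T)\) is affine and continuous from \(L^2(Q)\) to \(L^2(0,1)\): by Cauchy--Schwarz, \(|\langle u_n,g_n\rangle|\le\|u_n\|\,\|g_n\|\) with \(\|g_n\|^2\le T\). The preimage of the single point \(y_T\) is therefore closed and convex.

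The main step is nonemptiness, which is where \(y_T\in\mathcal A_T(y_0)\) enters. Imitating the rocket formula, we define modewise
\[
u_n^*=\frac{d_n}{\|g_n\|^2}\,g_n,\qquad \|g_n\|^2=\frac{1-e^{-2\lambda_nT}}{2\lambda_n}.
\]
Each \(u_n^*\) satisfies the \(n\)-th moment condition, and
\[
\sum_n\|u_n^*\|^2=\sum_n\frac{|d_n|^2}{\|g_n\|^2}=\sum_n\frac{2\lambda_n}{1-e^{-2\lambda_nT}}|d_n|^2<\infty
\]
precisely by the definition of \(\mathcal A_T(y_0)\). So \(u^*=\sum_n u_n^*e_n\in L^2(Q)\). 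We must also check that the state generated by \(u^*\) has terminal coefficients \(y_{T,n}\) for every \(n\), so that \(y(\cdot,T)=y_T\) in \(L^2(0,1)\). This holds by the continuity noted above, since the truncated controls converge in \(L^2(Q)\).

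Finally, existence and uniqueness of the minimizer follow from the projection theorem in Hilbert space \cite{Luenberger1969}. A nonempty closed convex set has a unique element of minimal norm, and \(J(u)=\frac12\|u\|^2\) is strictly convex, which gives both existence and uniqueness. We can also identify the minimizer with \(u^*\) directly. The energy splits as \(\sum_n\|u_n\|^2\), and the constraints decouple across modes, so minimization proceeds mode by mode. For each \(n\), the minimum-norm element of \(\{\langle u_n,g_n\rangle=d_n\}\) is \(d_ng_n/\|g_n\|^2\), exactly as in Section~2. For any admissible \(u\) we have \(\|u_n\|\ge\|u_n^*\|\), with equality only if \(u_n=u_n^*\). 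Hence \(u^*\) is the unique minimizer.

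We expect the only delicate points to be justifying the modal Duhamel representation for weak solutions and verifying that the infinite sum defining \(u^*\) produces the terminal state \(y_T\). Both follow from the \(L^2\) continuity of the control-to-terminal-state map.
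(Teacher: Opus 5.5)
Your proposal is correct and follows essentially the same route as the paper's proof in Appendix~A: you use the modal Duhamel formula, decouple the constraint into the moment conditions $\langle u_n,g_n\rangle=d_n$, take the modewise minimum-norm choice $u_n^*=d_n g_n/\|g_n\|^2$, use admissibility to get $u^*\in L^2(Q)$, and obtain uniqueness mode by mode. Your only addition is the global projection-theorem framing (closedness of the constraint set via continuity of the terminal map), which is harmless but redundant, since your closing modewise comparison $\|u_n\|\ge\|u_n^*\|$ already gives both existence and uniqueness, exactly as in the paper.
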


\begin{proof}[Proof sketch]
The proof is based on expanding the solution of the heat equation into
eigenfunctions of the Laplacian. Writing
  \[
    y(x,t) = \sum_{n=1}^{\infty} y_n(t)e_n(x), \qquad u(x,t) 
    = \sum_{n=1}^{\infty} u_n(t)e_n(x),
  \]
the state equation decouples into independent scalar ordinary differential
equations for each modal coefficient:
  \[
    y_n'(t) + \lambda_n y_n(t) = u_n(t).
  \]
Each mode corresponds to a scalar minimum-energy control problem analogous to
the rocket prototype studied in Section~2.  Solving these
independent problems yields explicit formulas for the optimal modal controls.
The admissibility condition ensures that the total control energy remains
finite.

Summing over all modes then produces the optimal control representation stated
below. A complete proof of these steps is provided in Appendix~A.
\end{proof}

\paragraph{Explicit Modal Representation}

The modal structure underlying the admissibility condition leads to an explicit
representation of the optimal control.

Define
\[
    d_n = y_{T,n} - e^{-\lambda_n T}y_{0,n}.
\]
Then the optimal control admits the representation
\[
    u^*(x,t) = \sum_{n=1}^{\infty} \frac{2\lambda_n
    e^{-\lambda_n(T-t)}}{1-e^{-2\lambda_n T}} d_n e_n(x).
\]

This representation shows that the hard terminal control problem decouples into
infinitely many independent modal problems. Each mode behaves analogously to the
scalar constraint identified in the rocket prototype of Section~2.

\subsection{Penalized Terminal Control Formulation}

While the hard terminal constraint formulation provides an ideal mathematical
description, it suffers from a fundamental limitation: not every terminal
profile can be achieved using finite-energy controls. This restriction motivates
the introduction of a penalized formulation in which terminal mismatch is
incorporated into the objective functional.

Penalized terminal constraints are widely used in optimization and numerical 
practice, since the resulting optimization problems remain solvable even when 
the desired terminal state is not exactly reachable 
\cite{Bertsekas1982,NocedalWright2006}.

\paragraph{Penalized Formulation}
For \(\alpha>0\), define the penalized functional
\[
	J_\alpha(u) = \frac12 \int_0^T \int_0^1 |u(x,t)|^2 \,dx\,dt 
	+ \frac{\alpha}{2} \int_0^1 |y(x,T)-y_T(x)|^2 \,dx.
\]
The penalized terminal control problem is
\[
  \min_{u \in L^2(Q)} \; J_\alpha(u)
  \quad \text{subject to} \quad  \eqref{state_equation},
\]
where the terminal constraint is replaced by a quadratic penalty term.

Unlike the hard constraint case, this formulation admits a unique solution for
every terminal target
\[
	y_T \in L^2(0,1).
\]

\paragraph{Existence and Uniqueness}

\begin{theorem}[Existence and Uniqueness -- Penalized Terminal Constraint]
Let
  \[
	y_0 \in L^2(0,1), \qquad y_T \in L^2(0,1), \qquad \alpha>0.
  \]
Then there exists a unique optimal control
  \[
	u_\alpha^* \in L^2(Q)
  \]
minimizing \(J_\alpha(u)\) subject to the state equation \eqref{state_equation}.
\end{theorem}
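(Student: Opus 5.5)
The plan is to apply the direct method in the Hilbert space $L^2(Q)$, using the affine structure of the control-to-state map. By linearity and well-posedness of \eqref{state_equation}, the terminal map can be written as
\[
  y(\cdot,T) = S u + z_0 .
\]
Here $z_0 = y_{\text{free}}(\cdot,T)$ is the terminal value of the uncontrolled solution, given modally by $\sum_n e^{-\lambda_n T} y_{0,n} e_n$. The operator $S:L^2(Q)\to L^2(0,1)$ is linear: it sends $u$ to the terminal value of the solution with zero initial data.

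The first step is to show that $S$ is bounded. I would do this with the modal formula
\[
  (Su)_n = \int_0^T e^{-\lambda_n(T-s)} u_n(s)\,ds .
\]
Cauchy--Schwarz then gives
\[
  |(Su)_n|^2 \le \frac{1-e^{-2\lambda_n T}}{2\lambda_n}\,\|u_n\|_{L^2(0,T)}^2 \le T\,\|u_n\|^2_{L^2(0,T)} .
\]
Summing over $n$ and using Parseval yields $\|Su\|_{L^2(0,1)} \le \sqrt{T}\,\|u\|_{L^2(Q)}$. This step is where a little care is needed, since it requires that the weak solution has a well-defined terminal trace in $L^2(0,1)$. The modal representation, the same one used in the proof of the hard-constraint theorem in Appendix~A, supplies this directly: the partial sums converge in $C([0,T];L^2(0,1))$. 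Alternatively, one could cite the standard embedding of the weak-solution space into $C([0,T];L^2)$ from \cite{Evans2010,Troltzsch2010}.

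With this in hand, the functional becomes
\[
  J_\alpha(u) = \tfrac12\|u\|^2 + \tfrac{\alpha}{2}\|Su + z_0 - y_T\|^2 .
\]
This is continuous, since it is a composition of a bounded affine map with norms. It is coercive because $J_\alpha(u) \ge \tfrac12\|u\|^2$. It is strictly convex because $\tfrac12\|u\|^2$ is strictly convex and the penalty term is convex, being a convex function composed with an affine map.

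Existence then follows from the direct method. Take a minimizing sequence; coercivity makes it bounded. Extract a subsequence converging weakly in $L^2(Q)$. A continuous convex functional is weakly lower semicontinuous, so the weak limit is a minimizer. Uniqueness follows from strict convexity: if there were two distinct minimizers, their midpoint would give a strictly smaller value, which is a contradiction.

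As a complement, and in preparation for the later convergence analysis, I would also record the optimality condition $u + \alpha S^*(Su + z_0 - y_T) = 0$. Solving it modally gives an explicit formula for $u_\alpha^*$, in analogy with the rocket prototype of Section~2.
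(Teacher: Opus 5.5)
Your proposal is correct and follows essentially the same route as the paper: the terminal map $u\mapsto y(\cdot,T)$ is affine and continuous, so $J_\alpha$ is continuous and strictly convex, it is coercive since $J_\alpha(u)\ge\frac12\|u\|_{L^2(Q)}^2$, and existence and uniqueness follow from standard convex minimization in a Hilbert space. You also supply details the paper's sketch leaves implicit, namely the explicit bound $\|Su\|_{L^2(0,1)}\le\sqrt{T}\,\|u\|_{L^2(Q)}$ via the modal Cauchy--Schwarz estimate, the direct-method argument via weak lower semicontinuity, and the midpoint argument for uniqueness.
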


\begin{proof}[Proof sketch]
The terminal map \(u\mapsto y(\cdot,T)\) is affine and continuous. Therefore
\(J_\alpha\) is continuous and strictly convex on \(L^2(Q)\). Since
\(J_\alpha(u)\ge \frac12\|u\|_{L^2(Q)}^2\), it is coercive. Existence and
uniqueness therefore follow from standard convex optimization theory in Hilbert
spaces; see, for example, \cite{Hinze2009,Troltzsch2010}.
\end{proof}

\begin{remark}
The penalized formulation replaces the hard terminal constraint with a quadratic
penalty that measures terminal mismatch. As \(\alpha\) increases, the optimization
places increasing emphasis on matching the terminal state, and the resulting
controls approach those of the hard terminal problem whenever the latter is
solvable. This observation leads naturally to the convergence results
established below.
\end{remark}

\subsection{Soft-to-Hard Convergence}

We now establish that penalized controls converge to the hard terminal control
as the penalty parameter increases.

\begin{theorem}[Soft-to-Hard Convergence]
Assume
  \[
	y_0\in L^2(0,1), \qquad y_T\in \mathcal A_T(y_0).
  \]
Let \(u^*\) denote the optimal control for the hard terminal constraint, and
\(u_\alpha^*\) the optimal control for the penalized formulation.

Then
  \[
	u_\alpha^* \to u^* \quad \text{in }L^2(Q) \quad \text{as} \quad \alpha\to\infty.
  \]

Moreover,
  \[
	y_\alpha(\cdot,T) \to y_T \quad \text{in }L^2(0,1).
  \]
\end{theorem}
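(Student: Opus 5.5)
The plan is to argue modally, mirroring the rocket prototype of Section~2. Expand $u=\sum_n u_n(t)e_n$ and $y=\sum_n y_n(t)e_n$. By Duhamel's formula,
\[
y_n(T)=e^{-\lambda_nT}y_{0,n}+\langle u_n,g_n\rangle_{L^2(0,T)},
\qquad g_n(t)=e^{-\lambda_n(T-t)} .
\]
The constraint $y_n(T)=y_{T,n}$ is therefore the scalar moment condition $\langle u_n,g_n\rangle=d_n$. Set
\[
a_n:=\|g_n\|^2=\frac{1-e^{-2\lambda_nT}}{2\lambda_n}.
\]
By Parseval both functionals decouple over the modes:
\[
J_\alpha(u)=\sum_n\Bigl(\tfrac12\|u_n\|^2+\tfrac{\alpha}{2}\,|\langle u_n,g_n\rangle-d_n|^2\Bigr).
\]
So the penalized problem is minimized mode by mode. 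Exactly the computation of Section~2 gives
\[
u^*_{\alpha,n}=\beta_{\alpha,n}\,u^*_n,\qquad \beta_{\alpha,n}=\frac{\alpha a_n}{1+\alpha a_n},
\]
where $u^*_n=(d_n/a_n)g_n$ is the hard-constrained modal control. This is consistent with the representation of $u^*$ stated after the existence theorem for the hard terminal problem.

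From these formulas I will write the two errors explicitly. Since $\|u_n^*\|^2=|d_n|^2/a_n$, the control error is
\[
\|u^*_\alpha-u^*\|_{L^2(Q)}^2=\sum_n (1-\beta_{\alpha,n})^2\|u^*_n\|^2
=\sum_n\frac{1}{(1+\alpha a_n)^2}\,\frac{|d_n|^2}{a_n}.
\]
For the terminal mismatch, $y_{\alpha,n}(T)-y_{T,n}=(\beta_{\alpha,n}-1)d_n$, so
\[
\|y_\alpha(\cdot,T)-y_T\|^2=\sum_n\frac{|d_n|^2}{(1+\alpha a_n)^2}.
\]

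The conclusion will then follow from dominated convergence for series. Each term tends to zero as $\alpha\to\infty$, because $a_n>0$ for every $n$. Each term is also bounded by an $\alpha$-independent summable majorant. For the control error the majorant is $|d_n|^2/a_n$. This is summable precisely because $y_T\in\mathcal A_T(y_0)$: the admissibility series is exactly $\sum_n |d_n|^2/a_n=\|u^*\|^2<\infty$. For the terminal error the majorant is $|d_n|^2$, which is summable since $y_0,y_T\in L^2$ and $|e^{-\lambda_nT}|\le1$. This second estimate therefore holds even without admissibility.

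The main obstacle is the rigorous justification of the modal decoupling. One must check that the penalized minimizer over $L^2(Q)$ is obtained by minimizing each mode independently. This holds because $L^2(Q)=\bigoplus_n L^2(0,T)e_n$ isometrically, and the functional is a sum of nonnegative modal terms, so the mode-wise minimizers assemble to an element of $L^2(Q)$. That assembled sequence has finite norm, since $\|u^*_{\alpha,n}\|\le\|u^*_n\|$ in the admissible case, and in general $J_\alpha$ is finite at $u=0$. By the uniqueness in the existence theorem for the penalized problem, it coincides with $u^*_\alpha$.

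Within each mode, the first-order condition $u_n=-\alpha(\langle u_n,g_n\rangle-d_n)g_n$ forces $u_n\in\operatorname{span}\{g_n\}$, exactly as in the prototype. Solving the resulting scalar equation gives the coefficient $\alpha d_n/(1+\alpha a_n)$, which is the formula for $u^*_{\alpha,n}$ above.
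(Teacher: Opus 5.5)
Your proposal is correct and follows the paper's proof essentially step for step: mode-wise first-order conditions give $u^*_{\alpha,n}=\frac{\alpha d_n}{1+\alpha a_n}g_n$, Parseval yields $\|u^*_\alpha-u^*\|^2=\sum_n |d_n|^2/(a_n(1+\alpha a_n)^2)$ and $\|y_\alpha(\cdot,T)-y_T\|^2=\sum_n|d_n|^2/(1+\alpha a_n)^2$, and dominated convergence is applied with the admissibility majorant $|d_n|^2/a_n$. The two differences work in your favor: you justify the mode-by-mode minimization, which the paper takes for granted; and for the terminal error your majorant $|d_n|^2$ (in place of the paper's $|d_n|^2/(\alpha a_n)\le|d_n|^2/a_n$ for $\alpha\ge1$) shows that $y_\alpha(\cdot,T)\to y_T$ holds for every $y_T\in L^2(0,1)$, without assuming admissibility.
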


\begin{proof}[Proof sketch]
The proof again relies on modal decomposition. Writing the penalized problem in
modal form leads to independent scalar minimization problems for each Fourier
coefficient. Each mode admits an explicit solution depending on the parameter
\(\alpha\). Passing to the limit as \(\alpha\to\infty\) shows convergence toward the
hard terminal solution whenever the admissibility condition holds.

A detailed proof is provided in Appendix~B.
\end{proof}

\subsection{Rates of Soft-to-Hard Convergence}

The convergence result above can be refined to obtain a quantitative rate.
Define
\[
    d_n = y_{T,n}-e^{-\lambda_nT}y_{0,n}, \qquad
    a_n = \frac{1-e^{-2\lambda_nT}}{2\lambda_n}.
\]
\begin{theorem}[Rates of Convergence]

\begin{enumerate}
\item Let \(0\le \theta\le \frac12\). Assume
\[
\sum_{n=1}^{\infty} \frac{|d_n|^2}{a_n^{1+2\theta}} < \infty.
\]
Then there exists a constant \(C_\theta>0\), independent of \(\alpha\), 
such that
\[
    \|y_\alpha(\cdot,T)-y_T\|_{L^2(0,1)}
    \le \frac{C_\theta}{\alpha^{\frac12+\theta}}.
\]
\item Let \(0<\theta\le 1\). Assume
\[
    \sum_{n=1}^{\infty} \frac{|d_n|^2}{a_n^{1+2\theta}} < \infty.
\]
Then there exists a constant \(D_\theta>0\), independent of \(\alpha\), 
such that
\[
    \|u_\alpha^*-u^*\|_{L^2(Q)} \le \frac{D_\theta}{\alpha^\theta}.
\]
\end{enumerate}
\end{theorem}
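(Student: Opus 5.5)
The plan is to reduce everything to the modal decomposition already used for the existence theorems, and then reproduce, mode by mode, the explicit computation from the rocket prototype of Section~2. Writing \(u=\sum_n u_n(t)e_n\) and \(y=\sum_n y_n(t)e_n\), Duhamel's formula gives
\[
y_n(T)=e^{-\lambda_nT}y_{0,n}+\langle u_n,g_n\rangle_{L^2(0,T)},\qquad g_n(t)=e^{-\lambda_n(T-t)}.
\]
A direct computation gives \(\|g_n\|^2=\frac{1-e^{-2\lambda_nT}}{2\lambda_n}=a_n\). Parseval in \(L^2(Q)\) and \(L^2(0,1)\) then splits \(J_\alpha\) into the sum of the scalar-moment functionals
\[
\tfrac12\|u_n\|^2+\tfrac{\alpha}{2}\bigl|\langle u_n,g_n\rangle-d_n\bigr|^2 .
\]
Each of these is exactly the rocket problem with \(g\) replaced by \(g_n\) and \(d\) by \(d_n\). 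Accordingly I would take as given (from Appendices~A and~B) the formulas
\[
u^*_n=\frac{d_n}{a_n}g_n,\qquad u^*_{\alpha,n}=\frac{\alpha a_n}{1+\alpha a_n}u^*_n .
\]
The first agrees with the explicit representation of \(u^*\) stated earlier.

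From these formulas I would extract two error identities, exactly as in Section~2.3:
\[
y_{\alpha,n}(T)-y_{T,n}=-\frac{d_n}{1+\alpha a_n},\qquad
\|u^*_{\alpha,n}-u^*_n\|_{L^2(0,T)}^2=\frac{1}{(1+\alpha a_n)^2}\,\frac{|d_n|^2}{a_n}.
\]
Summing over \(n\) gives
\[
\|y_\alpha(\cdot,T)-y_T\|^2=\sum_n\frac{|d_n|^2}{(1+\alpha a_n)^2},\qquad
\|u_\alpha^*-u^*\|^2_{L^2(Q)}=\sum_n\frac{|d_n|^2}{a_n(1+\alpha a_n)^2}.
\]

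The key elementary inequality is that for \(x>0\) and \(0\le s\le 2\),
\[
(1+x)^{-2}\le x^{-s}.
\]
This holds because \((1+x)^{-2}=(1+x)^{-s}(1+x)^{-(2-s)}\le x^{-s}\cdot 1\). I would apply it with \(x=\alpha a_n\) in each part.

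For part (1), take \(s=1+2\theta\in[1,2]\), which is allowed precisely because \(\theta\le\frac12\). Then
\[
\|y_\alpha(\cdot,T)-y_T\|^2\le\alpha^{-(1+2\theta)}\sum_n\frac{|d_n|^2}{a_n^{1+2\theta}}.
\]
Taking square roots gives the claim with \(C_\theta=\bigl(\sum_n|d_n|^2a_n^{-1-2\theta}\bigr)^{1/2}\).

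For part (2), take \(s=2\theta\in(0,2]\), allowed since \(\theta\le1\). Then
\[
\|u_\alpha^*-u^*\|^2\le\alpha^{-2\theta}\sum_n\frac{|d_n|^2}{a_n^{1+2\theta}},
\]
and \(D_\theta\) is the same square root. Note that \(a_n>0\) for every \(n\), so all quotients are well defined, and the \(\theta=0\) case of (1) is just the admissibility condition.

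The only step I expect to need care is the first one: justifying that the penalized minimizer is obtained by minimizing each mode separately, and that the modal errors may be summed via Parseval. Both problems are strictly convex, and the functionals are sums of nonnegative modal terms, so the minimizer of the sum is the sequence of modal minimizers provided this sequence lies in \(\ell^2\). For \(u^*_\alpha\) this is immediate because \(\|u^*_{\alpha,n}\|\le\|u^*_n\|\), and \(\sum_n\|u^*_n\|^2=\sum_n|d_n|^2/a_n<\infty\) by admissibility. Once this is recorded (it is essentially the content of Appendix~B), the rest is the one-line interpolation inequality above.
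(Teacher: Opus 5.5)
Your proposal is correct and follows essentially the same route as the paper's proof in Appendix~C. Both arguments use the same modal identities $y_{\alpha,n}(T)-y_{T,n}=-d_n/(1+\alpha a_n)$ and $\|u^*_{\alpha,n}-u^*_n\|^2=|d_n|^2/\bigl(a_n(1+\alpha a_n)^2\bigr)$, the same bound $(1+\alpha a_n)^{-2}\le(\alpha a_n)^{-s}$ with $s=1+2\theta$ and $s=2\theta$, and the same constants $C_\theta=D_\theta=\bigl(\sum_n|d_n|^2a_n^{-1-2\theta}\bigr)^{1/2}$. Your added justification that the penalized problem decouples mode by mode is a point the paper leaves implicit, and the admissibility you invoke there does follow from the hypothesis, since $a_n\le a_1$.
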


\begin{remark}
The estimates above quantify how the penalized formulation approaches the hard
terminal control problem as the penalty parameter \(\alpha\to\infty\). The
terminal-state error admits the baseline convergence rate
\[
O(1/{\sqrt{\alpha}})
\]
under the admissibility condition, while stronger modal summability assumptions
yield the sharper rate
\[
O(1/\alpha).
\]
The control convergence estimates exhibit a similar regularity--rate tradeoff:
stronger decay of the modal discrepancies \(d_n\) yields faster convergence of
the penalized controls.

These assumptions reflect the smoothing property of the heat equation. Highly
oscillatory terminal components require increasingly large control energy to
reproduce, so stronger convergence rates are obtained only when the
high-frequency modal coefficients decay sufficiently rapidly.

In particular, finite sine expansions satisfy all summability conditions
appearing in the theorem automatically, since only finitely many modal
coefficients are nonzero. Consequently, finite-dimensional spectral
approximations attain the endpoint convergence rates
\[ 
    O(1/\alpha)
\]
for both the terminal-state and control errors.

A detailed proof of these estimates, based on modal analysis and energy
arguments, is provided in Appendix~C.
\end{remark}

\section{Numerical Illustration}
Although the primary focus of this work is theoretical, numerical experiments
were performed to illustrate the soft-to-hard convergence behavior predicted by
the analysis. In particular, the experiments were designed to verify the
endpoint \(O(1/\alpha)\) convergence behavior arising under the stronger modal
summability assumptions satisfied by the test configurations considered here.
Empirical convergence rates were estimated from log--log plots of the errors
versus the penalty parameter \(\alpha\).

The computations were carried out in MATLAB/Octave using both explicit modal
formulas and finite-difference optimization methods. Additional implementation
details and extended computational results are presented in the companion study
\cite{KwonNunda2026}.

\subsection{Rocket Prototype Experiments}

We first consider the finite-dimensional prototype introduced in Section~2:
\[
	y''(t)=v(t)-1, \qquad y(0)=0, \qquad y'(0)=0,
\]
with prescribed terminal condition
\[
	y(T)=y_T.
\]

Using the explicit formulas derived earlier, terminal-state, control, and
trajectory errors were computed for penalty parameters \(\alpha\) sampled over a
logarithmically spaced range from
\[
	1 \quad \text{to} \quad 10^6.
\]
The observed asymptotic slopes were numerically close to
\[
	-1
\]
for all three error quantities, confirming the predicted \(O(1/\alpha)\)
convergence behavior.

The same experiments were repeated using a finite-difference optimization method
with \(N=80\) time steps. The observed asymptotic slopes were approximately
\[
	-0.9997,
\]
again consistent with the theoretical prediction.

\subsection{Parabolic PDE Experiments}

Additional experiments were performed for the heat equation control problem with
\[
	y_0(x)=0, \qquad y_T(x)=\sin(\pi x), \qquad T=1.
\]
The penalty parameter ranged over
\[
  \alpha = 1,10,50,100,500,1000,5000,10000.
\]
Both finite-mode truncations of the explicit modal representation and
finite-difference discretizations were tested. The observed asymptotic slopes
for the terminal error were
\[
	-0.9883 \quad \text{(modal approximation)},
\]
and
\[
	-0.9849 \quad \text{(finite-difference discretization)}.
\]
Since the target \(y_T(x)=\sin(\pi x)\) contains only a single Fourier mode,
all modal summability conditions appearing in the convergence theorem are
automatically satisfied. Consequently, the endpoint convergence rate
\[
\|y_\alpha(\cdot,T)-y_T\|_{L^2(0,1)} = O(1/\alpha) \]
is predicted theoretically and is consistent with the observed numerical
behavior.

\subsection{Discussion}

The numerical experiments support the theoretical analysis developed in this
work. Both the explicit modal computations and the fully discrete
finite-difference methods exhibit asymptotic convergence rates close to
\[
	O(1/\alpha),
\]
demonstrating that the observed soft-to-hard convergence behavior is robust with
respect to discretization.

The experiments also illustrate the role of admissibility and modal decay
conditions. For the admissible target
\[
	y_T(x)=\sin(\pi x),
\]
the terminal error decayed consistently with the endpoint
\(O(1/\alpha)\) convergence rate predicted by the theory.

In contrast, choosing rough modal data
\[
    d_n=\frac1n
\]
produced rapid growth of the truncated hard-control norm as the number of modes
increased. Although
\[
    \sum_{n=1}^{\infty}|d_n|^2<\infty,
\]
the admissibility condition is asymptotically equivalent to the weighted
summability condition
\[
    \sum_{n=1}^{\infty}\lambda_n |d_n|^2<\infty,
\]
since
\[
    a_n^{-1}\sim 2\lambda_n \qquad \text{as } n\to\infty.
\]
which fails in this case since \(\lambda_n\sim n^2\). This illustrates the
instability that can emerge in finite-dimensional approximations when the
corresponding infinite-dimensional admissibility condition is violated.

\section{Conclusion}

We investigated the relationship between penalized and hard terminal
constraints for a class of optimal control problems governed by linear
parabolic partial differential equations. By analyzing a quadratic 
penalized formulation, we established convergence of the corresponding 
optimal controls and terminal states to the exact constrained solution as 
the penalty parameter
\[
  \alpha\to\infty.
\]

The analysis provides explicit quantitative convergence estimates for both the
terminal-state and control errors. In particular, the terminal-state
convergence rate ranges from \(O(\alpha^{-1/2})\) under the baseline
admissibility condition to the sharper \(O(1/\alpha)\) rate under stronger
modal summability assumptions, while the control convergence rates are of order
\(O(\alpha^{-\theta})\).

A finite-dimensional prototype was also introduced to illustrate the underlying
projection structure and to provide intuition for the infinite-dimensional
setting. Numerical experiments using both modal and finite-difference
discretizations were found to be consistent with the theoretical predictions.

The present work is restricted to the one-dimensional heat equation with
quadratic penalization and distributed controls. Natural directions for future
work include extensions to more general linear parabolic equations and more
general control mechanisms, as well as additional study of numerical 
approximation and discretization effects.

%%%%%%%%%%%%%%%%%%%%%% APPENDIX % %%%%%%%%%%%%%%%%%%%%%%%%%%

\appendix
\renewcommand{\thesection}{Appendix \Alph{section}}

\section{Proof of the Hard Terminal Control Theorem}

We provide a detailed proof of the existence and uniqueness result stated in
Theorem~3.2.

\begin{theorem}[Existence and Uniqueness — Hard Terminal Constraint]

Let
  \[
	y_0 \in L^2(0,1), \qquad y_T \in \mathcal A_T(y_0).
  \]
Then there exists a unique minimum-energy control
  \[
	u^* \in L^2(Q)
  \]
such that
  \[
	y(\cdot,T)=y_T,
  \]
and \(u^*\) minimizes
  \[
	J(u) = \frac12 \|u\|_{L^2(Q)}^2.
  \]

\end{theorem}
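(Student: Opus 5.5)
We reduce the problem to independent scalar minimum-energy problems, one per mode, exactly as in the rocket prototype of Section~2. We expand $u(\cdot,t)=\sum_n u_n(t)e_n$ and $y(\cdot,t)=\sum_n y_n(t)e_n$ in the orthonormal basis $\{e_n\}$ of $L^2(0,1)$. By Parseval, $\|u\|_{L^2(Q)}^2=\sum_n\|u_n\|_{L^2(0,T)}^2$, so $u\mapsto(u_n)_n$ is an isometric isomorphism of $L^2(Q)$ onto $\ell^2(L^2(0,T))$. The weak solution then satisfies $y_n'+\lambda_n y_n=u_n$ with $y_n(0)=y_{0,n}$. By Duhamel's formula,
\[
y_n(T)=e^{-\lambda_nT}y_{0,n}+\int_0^T e^{-\lambda_n(T-s)}u_n(s)\,ds .
\]
Setting $g_n(s)=e^{-\lambda_n(T-s)}$, the terminal condition $y(\cdot,T)=y_T$ is therefore equivalent to the infinite family of scalar moment constraints $\langle u_n,g_n\rangle_{L^2(0,T)}=d_n$ for all $n$, where $d_n=y_{T,n}-e^{-\lambda_nT}y_{0,n}$. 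A direct computation gives $\|g_n\|^2=a_n=(1-e^{-2\lambda_nT})/(2\lambda_n)$.

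\textbf{Existence.} Because the constraints decouple, I would solve each modal problem by the projection argument of Section~2. The minimizer of $\|u_n\|^2$ subject to $\langle u_n,g_n\rangle=d_n$ is $u_n^*=(d_n/a_n)g_n$, and it has $\|u_n^*\|^2=|d_n|^2/a_n$. Summing over $n$,
\[
\sum_n\|u_n^*\|^2=\sum_n \frac{2\lambda_n}{1-e^{-2\lambda_nT}}|d_n|^2<\infty
\]
exactly when $y_T\in\mathcal A_T(y_0)$. Hence $u^*:=\sum_n u_n^*e_n\in L^2(Q)$, and this reproduces the explicit modal representation stated earlier. By the modal formula, the associated state satisfies $y_n(T)=y_{T,n}$ for every $n$, so $y(\cdot,T)=y_T$ in $L^2(0,1)$. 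The admissible set $U_{ad}=\{u:\langle u_n,g_n\rangle=d_n\ \forall n\}$ is therefore nonempty. For minimality, take any $u\in U_{ad}$ and write $w=u-u^*$, so that $\langle w_n,g_n\rangle=0$ for all $n$. Then $\langle w,u^*\rangle_{L^2(Q)}=\sum_n (d_n/a_n)\langle w_n,g_n\rangle=0$, which gives $\|u\|^2=\|u^*\|^2+\|w\|^2$.

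\textbf{Uniqueness.} The Pythagorean identity above already shows that any other admissible control has strictly larger energy. Abstractly, $U_{ad}$ is a closed affine subspace of $L^2(Q)$, being the preimage of a point under the continuous affine terminal map, so $u^*$ is the unique projection of the origin onto it.

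The main obstacle is justifying the modal reduction rigorously. One must check that the weak solution in $L^2(0,T;H_0^1)$, with $y\in C([0,T];L^2)$ so that $y(\cdot,T)$ makes sense, really has the Duhamel coefficients given above, and that the interchange of sum and integral in $\langle w,u^*\rangle$ is valid. I would handle both points by testing the weak formulation against $e_n\varphi(t)$ for smooth $\varphi$, which yields the scalar ODE in the distributional sense, and by invoking Cauchy--Schwarz in $\ell^2(L^2(0,T))$ for the sum, which converges absolutely since $u^*,w\in L^2(Q)$.
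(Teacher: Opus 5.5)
Your proposal is correct and follows the paper's proof essentially step for step. Both use the modal expansion, the Duhamel reduction to the moment constraints $\langle u_n,g_n\rangle=d_n$, the per-mode projection $u_n^*=(d_n/a_n)g_n$, and summation using admissibility. Your Pythagorean identity $\|u\|^2=\|u^*\|^2+\|u-u^*\|^2$ for every admissible $u$ is a useful tightening of the global-minimality and uniqueness step, which the paper handles only by appealing to separability and to modewise uniqueness of the minimum-norm element of each hyperplane $C_n$.
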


\begin{proof}

We begin by expanding the state and control variables in terms of the
eigenfunctions of the Laplacian. Since
  \[
	y\in L^2(0,T;H_0^1(0,1)) \subset L^2(Q), \qquad u\in L^2(Q),
  \]
Fubini's theorem implies that \(y(\cdot,t),u(\cdot,t)\in L^2(0,1)\) for almost
every \(t\in(0,T)\). Because \(\{e_n\}_{n=1}^\infty\) forms an orthonormal basis
of \(L^2(0,1)\), we may expand
  \[
	y(x,t) = \sum_{n=1}^{\infty} y_n(t)e_n(x), \qquad 
	u(x,t) = \sum_{n=1}^{\infty} u_n(t)e_n(x).
  \]
Substituting these expansions into the state equation
  \[
	y_t-y_{xx}=u
  \]
yields, for each mode \(n\),
  \[
	y_n'(t)+\lambda_n y_n(t)=u_n(t),
  \]
with initial condition
  \[
	y_n(0)=y_{0,n}.
  \]
where
\[
    y_{0,n}=\int_0^1 y_0(x)e_n(x)\,dx
\]
denotes the \(n\)-th Fourier coefficient of the initial state.
Solving this linear ordinary differential equation gives
  \[
	y_n(t) = e^{-\lambda_n t}y_{0,n} + \int_0^t e^{-\lambda_n(t-s)}u_n(s)\,ds.
  \]
Imposing the terminal condition \(y(x,T)=y_T(x)\) leads to
  \[
	y_{T,n} = e^{-\lambda_n T}y_{0,n} + \int_0^T e^{-\lambda_n(T-s)}u_n(s)\,ds.
  \]
where
\[
    y_{T,n}  = \int_0^1 y_T(x)e_n(x)\,dx
\]
denotes the \(n\)-th Fourier coefficient of the terminal profile.
Define
  \[
	d_n = y_{T,n} - e^{-\lambda_n T}y_{0,n}.
  \]
Then
  \[
	d_n = \int_0^T e^{-\lambda_n(T-s)}u_n(s)\,ds 
    = \langle g_n, u_n \rangle_{L^2(0,T)}
  \]
  where
  \[
	g_n(s) = e^{-\lambda_n(T-s)}.
  \]

Using the orthonormality of \(\{e_n\}\), the \(L^2(Q)\) norm of the control
decomposes as
  \[
    \|u\|_{L^2(Q)}^2 = \sum_{n=1}^\infty \|u_n\|_{L^2(0,T)}^2.
  \]
Moreover, the terminal constraint also decouples across modes, yielding
independent conditions \(\langle u_n, g_n \rangle = d_n\) for each \(n\). Since both
the cost functional and the constraints are separable across modes, the
minimization problem reduces to solving each mode independently.

Thus, each modal component reduces to a minimum-energy problem of the form
  \[
    \min_{u_n \in L^2(0,T)}
    \frac12 \int_0^T |u_n(s)|^2\,ds
    \quad \text{subject to} \quad
    \langle u_n, g_n \rangle = d_n.
  \]

The set
\[
C_n=\{u_n:\langle u_n,g_n\rangle=d_n\}
\]
is an affine hyperplane with normal direction \(g_n\). Indeed, if
\(u_n,w\in C_n\), then
\[
\langle w-u_n,g_n\rangle=0,
\]
so every direction tangent to \(C_n\) is orthogonal to \(g_n\).

The closest point in \(C_n\) to the origin therefore lies along the normal
direction, and hence
\[
u_n^*\in \operatorname{span}\{g_n\}.
\]
Writing \(u_n^*(t) = c_n g_n(t)\) and imposing the constraint 
\(\langle u_n^*, g_n \rangle = d_n\), we obtain
  \[
	c_n \|g_n\|^2 = d_n, \quad \text{so} \quad c_n = \frac{d_n}{\|g_n\|^2}.
  \]
Therefore,
  \[
	u_n^*(t) = \frac{d_n}{\|g_n\|^2} g_n(t).
  \]

A direct computation gives
  \[
    \|g_n\|^2 = \int_0^T e^{-2\lambda_n(T-s)}ds
    = \frac{1-e^{-2\lambda_n T}}{2\lambda_n}.
  \]
Therefore
  \[
	u_n^*(t) = \frac{2\lambda_n e^{-\lambda_n(T-t)}}{ 1-e^{-2\lambda_n T} } d_n.
  \]

The full optimal control is obtained by summing over all modes:
  \[
	u^*(x,t) = \sum_{n=1}^{\infty} 
	\frac{ 2\lambda_n e^{-\lambda_n(T-t)} }{1-e^{-2\lambda_n T} } d_n e_n(x).
  \]
Since
  \[
	y_T\in\mathcal A_T(y_0),
  \]
the admissibility condition implies
  \[
    \sum_{n=1}^{\infty}
    \frac{2\lambda_n}{1-e^{-2\lambda_n T}} |d_n|^2 <\infty.
  \]
Moreover, Parseval's identity gives
  \[
    \|u^*\|_{L^2(Q)}^2 = \sum_{n=1}^{\infty} \|u_n^*\|_{L^2(0,T)}^2.
  \]
Since 
  \[
    \|u_n^*\|_{L^2(0,T)}^2 = \frac{|d_n|^2}{\|g_n\|^2}  
    = \frac{2\lambda_n}{1-e^{-2\lambda_n T}}|d_n|^2
  \]
we have
  \[
    \|u^*\|_{L^2(Q)}^2 = \sum_{n=1}^{\infty}
    \frac{2\lambda_n}{1-e^{-2\lambda_n T}} |d_n|^2 <\infty.
  \]
Hence \(u^*\in L^2(Q)\).

Substituting the control into the modal representation verifies the terminal
condition as follows. The modal solution at time \(T\) is
  \[
	y_n(T) = e^{-\lambda_n T}y_{0,n} + \int_0^T e^{-\lambda_n(T-s)}u_n^*(s)\,ds.
  \]
We compute
  \[
    \int_0^T e^{-\lambda_n(T-s)}u_n^*(s)\,ds
    = \int_0^T g_n(s)\frac{d_n}{\|g_n\|^2}g_n(s)\,ds
    = \frac{d_n}{\|g_n\|^2}\int_0^T |g_n(s)|^2\,ds
    = d_n.
  \]
Therefore
  \[
	y_n(T) = e^{-\lambda_n T}y_{0,n}+d_n.
  \]
Since
  \[
	d_n=y_{T,n}-e^{-\lambda_n T}y_{0,n},
  \]
it follows that
  \[
	y_n(T)=y_{T,n}.
  \]
Thus each Fourier coefficient of \(y(\cdot,T)\) agrees with the corresponding
Fourier coefficient of \(y_T\), and hence
  \[
	y(\cdot,T)=y_T.
  \]

To prove uniqueness, suppose that \(\tilde u\in L^2(Q)\) is another
minimum-energy control satisfying the same terminal constraint. Let
\(\tilde u_n\) denote its modal coefficients. Then each mode satisfies
\[
    \langle \tilde u_n,g_n\rangle=d_n.
\]
Since \(u_n^*\) is the unique minimum-norm element of the affine hyperplane
\[
    C_n=\{u_n\in L^2(0,T):\langle u_n,g_n\rangle=d_n\},
\]
it follows that
\[
    \tilde u_n=u_n^* \qquad \text{for every } n.
\]
Therefore
\[
    \tilde u=u^* \qquad \text{in } L^2(Q),
\]
and the minimum-energy control is unique.
\end{proof}

%%%%%%%%%%%%%%%%%%%%%%%%%%%%%%%%%%%%%%%%%%%%%%%%%%%%%%%%%%%%

\section{Proof of Soft-to-Hard Convergence}

We prove the convergence result stated in Theorem~3.4.

\begin{theorem}[Soft-to-Hard Convergence]

Assume

  \[
	y_0\in L^2(0,1), \qquad y_T\in \mathcal A_T(y_0).
  \]

Let \(u^*\) denote the optimal control for the hard terminal constraint, and
\(u_\alpha^*\) the optimal control for the penalized formulation.
Then
  \[
	u_\alpha^* \to u^* \quad \text{in }L^2(Q), \qquad \alpha\to\infty.
  \]
Moreover,
  \[
	y_\alpha(\cdot,T) \to y_T \quad \text{in }L^2(0,1).
  \]
\end{theorem}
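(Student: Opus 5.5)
We follow the modal reduction used in Appendix~A. Expanding $u=\sum_n u_n e_n$ and using orthonormality of $\{e_n\}$, the terminal state has coefficients $y_n(T)=e^{-\lambda_nT}y_{0,n}+\langle u_n,g_n\rangle$ with $g_n(s)=e^{-\lambda_n(T-s)}$. The penalized functional therefore splits as
\[
J_\alpha(u)=\sum_{n=1}^\infty\Big(\tfrac12\|u_n\|^2_{L^2(0,T)}+\tfrac{\alpha}{2}|\langle u_n,g_n\rangle-d_n|^2\Big).
\]
Each summand is exactly the rocket-type penalized problem of Section~2, with $g$ replaced by $g_n$ and $d$ by $d_n$. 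Since the summands involve disjoint variables, the unique minimizer from Theorem~3.3 is obtained by minimizing each mode separately.

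Repeating the computation of Section~2.2 gives
\[
u^*_{\alpha,n}=\beta_{\alpha,n}u_n^*,\qquad \beta_{\alpha,n}=\frac{\alpha a_n}{1+\alpha a_n},\qquad a_n=\|g_n\|^2=\frac{1-e^{-2\lambda_nT}}{2\lambda_n},
\]
where $u_n^*=d_ng_n/a_n$ is the hard modal control from Appendix~A. We check that $u_\alpha^*=\sum_n u^*_{\alpha,n}e_n$ lies in $L^2(Q)$: since $0<\beta_{\alpha,n}<1$, we have $\|u^*_{\alpha,n}\|\le\|u^*_n\|$. For the terminal mismatch, $\langle u^*_{\alpha,n},g_n\rangle=\beta_{\alpha,n}d_n$, so
\[
y_{\alpha,n}(T)-y_{T,n}=-\frac{d_n}{1+\alpha a_n}.
\]

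By Parseval,
\[
\|u_\alpha^*-u^*\|_{L^2(Q)}^2=\sum_n\frac{\|u_n^*\|^2}{(1+\alpha a_n)^2},\qquad \|y_\alpha(\cdot,T)-y_T\|^2_{L^2(0,1)}=\sum_n\frac{|d_n|^2}{(1+\alpha a_n)^2}.
\]
We pass to the limit by dominated convergence for series.

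\begin{itemize}
\item For the first sum, each term tends to $0$ as $\alpha\to\infty$ because $a_n>0$. The terms are dominated by $\|u_n^*\|^2=|d_n|^2/a_n$, which is summable precisely by the admissibility condition $y_T\in\mathcal A_T(y_0)$. Hence $u^*_\alpha\to u^*$ in $L^2(Q)$.
\item For the second sum, the terms are dominated by $|d_n|^2$, which is summable since $y_0,y_T\in L^2(0,1)$. The same argument gives $y_\alpha(\cdot,T)\to y_T$ in $L^2(0,1)$.
\end{itemize}

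For the terminal state we can also get a rate: using $(1+\alpha a_n)^{-2}\le(4\alpha a_n)^{-1}$, the second sum is bounded by $\frac{1}{4\alpha}\sum_n|d_n|^2/a_n$, which gives order $\alpha^{-1/2}$.

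The main obstacle is rigor in the decoupling step, namely justifying that minimizing the infinite sum is equivalent to minimizing mode by mode. The argument is that $J_\alpha(u)\ge\sum_n\min(\text{mode }n)$ for every $u$, and that the modewise minimizers assemble into an element of $L^2(Q)$ by the bound $\|u^*_{\alpha,n}\|\le\|u^*_n\|$. A second point is the uniformity needed to interchange the limit and the sum. Note that the convergence of $u_\alpha^*$ is not uniform over modes: $a_n\sim 1/(2\lambda_n)\to0$, so high modes converge slowly. The interchange is justified by the domination argument above, not by any uniform bound.
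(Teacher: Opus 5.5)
Your proposal is correct and follows essentially the same route as the paper: modal decoupling, the explicit per-mode minimizer $u^*_{\alpha,n}=\frac{\alpha a_n}{1+\alpha a_n}u_n^*$, Parseval, and dominated convergence for series, with the admissibility series $\sum_n |d_n|^2/a_n$ dominating the control error. The only minor differences are these: you dominate the terminal-state terms by $|d_n|^2$ (valid for every $\alpha>0$ and needing only $y_0,y_T\in L^2(0,1)$) rather than by the paper's $|d_n|^2/a_n$ for $\alpha\ge 1$, and you justify the mode-by-mode minimization explicitly, which the paper leaves implicit.
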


\begin{proof}
Define
  \[
	a_n := \|g_n\|_{L^2(0,T)}^2 = \frac{1 - e^{-2\lambda_n T}}{2\lambda_n}.
  \]
From the explicit modal formula derived in Appendix~A,
  \[
	u_n^*(t) = \frac{d_n}{a_n} g_n(t).
  \]
For the penalized problem, the \(n\)-th modal functional is
  \[
	J_{\alpha,n}(u_n) 
	= \frac12\|u_n\|_{L^2(0,T)}^2 
	+ \frac{\alpha}{2} \left|\langle u_n,g_n\rangle-d_n\right|^2.
  \]
Its first-order optimality condition is
  \[
	u_{\alpha,n}^* 
	+ \alpha\bigl(\langle u_{\alpha,n}^*,g_n\rangle-d_n\bigr)g_n =0.
  \]
Hence \(u_{\alpha,n}^*\in \operatorname{span}\{g_n\}\), so we write
  \[
	u_{\alpha,n}^*(t)=c_{\alpha,n}g_n(t).
  \]
Substituting this into the optimality condition, we obtain
  \[
	c_{\alpha,n}+\alpha(c_{\alpha,n}a_n-d_n)=0.
  \]
Solving for \(c_{\alpha,n}\), we get
  \[
	c_{\alpha,n} = \frac{\alpha d_n}{1+\alpha a_n},
  \]
and consequently,
  \[
	u_{\alpha,n}^*(t) = \frac{\alpha d_n}{1+\alpha a_n} g_n(t).
  \]
We compute
  \[
    \begin{aligned}
	u_{\alpha,n}^*(t) - u_n^*(t) 
	& = \left( \frac{\alpha d_n}{1+\alpha a_n} -
	\frac{d_n}{a_n} \right) g_n(t) \\ 
	& = -\frac{d_n}{a_n(1+\alpha a_n)} g_n(t).
    \end{aligned}
  \]
Hence
  \[
    \|u_{\alpha,n}^*-u_n^*\|_{L^2(0,T)}^2
    = \frac{|d_n|^2}{a_n(1+\alpha a_n)^2}.
  \]
By Parseval's identity,
  \[
    \|u_\alpha^*-u^*\|_{L^2(Q)}^2
    = \sum_{n=1}^{\infty} \frac{|d_n|^2}{a_n(1+\alpha a_n)^2}.
  \]
For each fixed \(n\), the summand tends to zero as \(\alpha\to\infty\).
Moreover,
  \[
	0 \le \frac{|d_n|^2}{a_n(1+\alpha a_n)^2} \le \frac{|d_n|^2}{a_n},
  \]
and the right-hand side is summable by the admissibility condition. 
Therefore, by the dominated convergence theorem for series,
  \[
    \|u_\alpha^*-u^*\|_{L^2(Q)}^2 \to 0.
  \]

Using the modal representation and the formula for \(u_{\alpha,n}^*\), 
we have
\[
\begin{aligned}
    y_{\alpha,n}(T) 
    &= e^{-\lambda_n T}y_{0,n} +\int_0^T g_n(s)u_{\alpha,n}^*(s)\,ds \\
    &= e^{-\lambda_n T}y_{0,n} 
        +\frac{\alpha d_n}{1+\alpha a_n} \int_0^T |g_n(s)|^2\,ds \\
    &= e^{-\lambda_n T}y_{0,n} + \frac{\alpha a_n}{1+\alpha a_n}d_n.
\end{aligned}
\]
Since \(d_n=y_{T,n}-e^{-\lambda_n T}y_{0,n}\), it follows that
\[
    y_{\alpha,n}(T)-y_{T,n} = -\frac{1}{1+\alpha a_n}d_n.
\]
By Parseval's identity,
  \[
    \|y_\alpha(\cdot,T)-y_T\|_{L^2(0,1)}^2
    = \sum_{n=1}^{\infty} \frac{|d_n|^2}{(1+\alpha a_n)^2}.
  \]
For each fixed \(n\), the summand tends to zero as \(\alpha\to\infty\).
For \(\alpha\ge 1\), we have
\[
    (1+\alpha a_n)^2 \ge 1+\alpha a_n \ge \alpha a_n,
\]
and therefore
\[
    0 \le \frac{|d_n|^2}{(1+\alpha a_n)^2}
      \le \frac{|d_n|^2}{\alpha a_n}
      \le \frac{|d_n|^2}{a_n}.
\]
By the admissibility condition,
\[
   \sum_{n=1}^{\infty}\frac{|d_n|^2}{a_n}<\infty.
\]
Therefore, the dominated convergence theorem for series implies
\[
    \|y_\alpha(\cdot,T)-y_T\|_{L^2(0,1)}^2 \to 0.
\]
Hence
\[
    y_\alpha(\cdot,T)\to y_T \quad \text{in }L^2(0,1).
\]
\end{proof}

%%%%%%%%%%%%%%%%%%%%%% %%%%%%%%%%%%%%%%%%%%%%%%%%%%%%%%%%%%%%%%%
	
\section{Proof of the Rate of Convergence}

This section establishes the quantitative convergence estimates stated in
Theorem~3.5.

\begin{theorem}[Rates of Convergence]

\begin{enumerate}
\item Let \(0\le \theta\le \frac12\). Assume
\[
\sum_{n=1}^{\infty} \frac{|d_n|^2}{a_n^{1+2\theta}} < \infty.
\]
Then there exists a constant \(C_\theta>0\), independent of \(\alpha\), 
such that
\[
    \|y_\alpha(\cdot,T)-y_T\|_{L^2(0,1)}
    \le \frac{C_\theta}{\alpha^{\frac12+\theta}}.
\]
\item Let \(0<\theta\le 1\). Assume
\[
    \sum_{n=1}^{\infty} \frac{|d_n|^2}{a_n^{1+2\theta}} < \infty.
\]
Then there exists a constant \(D_\theta>0\), independent of \(\alpha\), 
such that
\[
    \|u_\alpha^*-u^*\|_{L^2(Q)} \le \frac{D_\theta}{\alpha^\theta}.
\]
\end{enumerate}
\end{theorem}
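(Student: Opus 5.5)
The plan is to start from the two exact modal identities established in Appendix~B and reduce both estimates to a single scalar inequality. Those identities are
\[
\|y_\alpha(\cdot,T)-y_T\|_{L^2(0,1)}^2=\sum_{n=1}^\infty \frac{|d_n|^2}{(1+\alpha a_n)^2},
\qquad
\|u_\alpha^*-u^*\|_{L^2(Q)}^2=\sum_{n=1}^\infty \frac{|d_n|^2}{a_n(1+\alpha a_n)^2}.
\]
The scalar inequality I will use is the interpolation bound
\[
\frac{s^{\gamma}}{(1+s)^2}\le 1 \qquad \text{for all } s\ge 0,\ 0\le\gamma\le 2,
\]
applied with \(s=\alpha a_n\). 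It holds because \(s^\gamma\le (1+s)^\gamma\le(1+s)^2\).

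For part~1, I rewrite each summand as
\[
\frac{|d_n|^2}{(1+\alpha a_n)^2}=\frac{|d_n|^2}{a_n^{1+2\theta}}\cdot\frac{a_n^{1+2\theta}}{(1+\alpha a_n)^2}
=\frac{|d_n|^2}{a_n^{1+2\theta}}\cdot\alpha^{-(1+2\theta)}\frac{(\alpha a_n)^{1+2\theta}}{(1+\alpha a_n)^2}.
\]
Since \(0\le\theta\le\frac12\), the exponent \(\gamma=1+2\theta\) lies in \([1,2]\), so the last factor is at most \(1\). Summing over \(n\) gives
\[
\|y_\alpha(\cdot,T)-y_T\|^2\le \alpha^{-(1+2\theta)}\sum_n \frac{|d_n|^2}{a_n^{1+2\theta}}.
\]
Taking square roots yields part~1 with \(C_\theta=\bigl(\sum_n |d_n|^2a_n^{-1-2\theta}\bigr)^{1/2}\), and this holds for every \(\alpha>0\). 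The case \(\theta=0\) is exactly the admissibility condition and gives the baseline rate \(O(\alpha^{-1/2})\). The case \(\theta=\frac12\) gives \(O(1/\alpha)\).

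For part~2, I proceed the same way:
\[
\frac{|d_n|^2}{a_n(1+\alpha a_n)^2}=\frac{|d_n|^2}{a_n^{1+2\theta}}\cdot\alpha^{-2\theta}\frac{(\alpha a_n)^{2\theta}}{(1+\alpha a_n)^2}.
\]
Here \(\gamma=2\theta\in(0,2]\), so the scalar bound again applies. This gives
\[
\|u_\alpha^*-u^*\|\le D_\theta\alpha^{-\theta}, \qquad D_\theta=\Bigl(\sum_n |d_n|^2a_n^{-1-2\theta}\Bigr)^{1/2}.
\]

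There is no genuine obstacle once the explicit modal formulas of Appendix~B are in hand. The only step needing care is matching the range of \(\theta\) to the admissible exponent \(\gamma\le 2\) in the scalar inequality; this constraint is what caps the rates at \(O(1/\alpha)\) in both parts. I would also note that \(a_n\le \frac{1}{2\lambda_1}\) is bounded above. Hence the hypotheses with larger \(\theta\) imply those with smaller \(\theta\), which is consistent with the stated regularity–rate tradeoff. For the terminal error this is only a remark; the identity is the whole argument.
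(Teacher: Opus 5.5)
Your proposal is correct and is essentially the paper's proof. The paper also starts from the modal identities of Appendix~B and uses $(1+\alpha a_n)^2 \ge (1+\alpha a_n)^{\gamma} \ge (\alpha a_n)^{\gamma}$ with $\gamma = 1+2\theta$ for the terminal error and $\gamma = 2\theta$ for the control error, arriving at the same constants $C_\theta = D_\theta = \bigl(\sum_n |d_n|^2 a_n^{-1-2\theta}\bigr)^{1/2}$.
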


\begin{proof}
Using the modal identities derived in the proof of Theorem~3.4, we have
\[
    y_{\alpha,n}(T)-y_{T,n} = -\frac{d_n}{1+\alpha a_n},
\]
and
\[
    \|u_{\alpha,n}^*-u_n^*\|_{L^2(0,T)}^2 = \frac{|d_n|^2}{a_n(1+\alpha a_n)^2}.
\]

We first estimate the terminal-state error. By Parseval's identity,
\[
    \|y_\alpha(\cdot,T)-y_T\|_{L^2(0,1)}^2 
    = \sum_{n=1}^{\infty} \frac{|d_n|^2}{(1+\alpha a_n)^2}.
\]
Let \(0\le\theta\le\frac12\). Since
\[
    1+\alpha a_n \ge \alpha a_n,
\]
raising both sides to the power \(1+2\theta\) gives
\[
    (1+\alpha a_n)^{1+2\theta} \ge (\alpha a_n)^{1+2\theta}.
\]
Because \(1+2\theta\le2\),
\[
    (1+\alpha a_n)^2 \ge (1+\alpha a_n)^{1+2\theta}.
\]
Therefore,
\[
    \frac{1}{(1+\alpha a_n)^2} \le \frac{1}{\alpha^{1+2\theta}a_n^{1+2\theta}}.
\]
Hence
\[
    \|y_\alpha(\cdot,T)-y_T\|_{L^2(0,1)}^2 \le \frac{1}{\alpha^{1+2\theta}}
    \sum_{n=1}^{\infty} \frac{|d_n|^2}{a_n^{1+2\theta}}.
\]
It follows that
\[
    \|y_\alpha(\cdot,T)-y_T\|_{L^2(0,1)} 
    \le \frac{C_\theta}{\alpha^{\frac12+\theta}},
\]
where
\[
    C_\theta = \left( \sum_{n=1}^{\infty} 
    \frac{|d_n|^2}{a_n^{1+2\theta}} \right)^{1/2}.
\]

We now estimate the control error. By Parseval's identity,
\[
    \|u_\alpha^*-u^*\|_{L^2(Q)}^2 = \sum_{n=1}^{\infty}
    \frac{|d_n|^2}{a_n(1+\alpha a_n)^2}.
\]
Let \(0<\theta\le1\). Since
\[
    1+\alpha a_n \ge \alpha a_n,
\]
raising both sides to the power \(2\theta\) gives
\[
    (1+\alpha a_n)^{2\theta} \ge (\alpha a_n)^{2\theta}.
\]
Because \(2\theta\le 2\),
\[
    (1+\alpha a_n)^2 \ge (1+\alpha a_n)^{2\theta}.
\]
Therefore,
\[
    \frac{1}{a_n(1+\alpha a_n)^2} \le \frac{1}{\alpha^{2\theta}a_n^{1+2\theta}}.
\]
Hence
\[
    \|u_\alpha^*-u^*\|_{L^2(Q)}^2 \le \frac{1}{\alpha^{2\theta}}
    \sum_{n=1}^{\infty} \frac{|d_n|^2}{a_n^{1+2\theta}}.
\]
It follows that
\[
    \|u_\alpha^*-u^*\|_{L^2(Q)} \le \frac{D_\theta}{\alpha^\theta},
\]
where
\[
    D_\theta 
    = \left( \sum_{n=1}^{\infty} \frac{|d_n|^2}{a_n^{1+2\theta}} \right)^{1/2}.
\]

This proves both estimates.

\end{proof}

\begin{remark}
In the terminal-state estimate, the case \(\theta=0\) corresponds precisely to
the admissibility condition
\[
\sum_{n=1}^{\infty}\frac{|d_n|^2}{a_n}<\infty,
\]
and yields the convergence rate
\[
\|y_\alpha(\cdot,T)-y_T\|_{L^2(0,1)}
=
O(\alpha^{-1/2}).
\]
Taking \(\theta=\frac12\) gives the stronger estimate
\[
\|y_\alpha(\cdot,T)-y_T\|_{L^2(0,1)}
=
O(\alpha^{-1})
\]
under the stronger summability condition
\[
\sum_{n=1}^{\infty}\frac{|d_n|^2}{a_n^2}<\infty.
\]

Similarly, taking \(\theta=1\) in the control estimate yields
\[
\|u_\alpha^*-u^*\|_{L^2(Q)}
=
O(\alpha^{-1})
\]
provided that
\[
\sum_{n=1}^{\infty}\frac{|d_n|^2}{a_n^3}<\infty.
\]
\end{remark}

\begin{remark}[Sharpness of the rate]
The convergence exponents are dictated by the exact modal identities
\[
y_{\alpha,n}(T)-y_{T,n}
=
-\frac{d_n}{1+\alpha a_n}
\]
and
\[
\|u_{\alpha,n}^*-u_n^*\|_{L^2(0,T)}^2
=
\frac{|d_n|^2}{a_n(1+\alpha a_n)^2}.
\]
Consequently, stronger convergence rates require correspondingly stronger
spectral decay assumptions on the coefficients \(d_n\).
\end{remark}

\bibliographystyle{plain}
\bibliography{soft_to_hard_refs}

\end{document}